\documentclass[10pt]{amsart}

\pagestyle{empty}
\usepackage{amsmath, amsfonts}
\usepackage[all]{xy}
\usepackage{footnote}

\textwidth 4.5in
\textheight 7.25in

\pagestyle{empty}

\title{Nim on the Complete Graph}
\author{Lindsay Erickson}
\begin{document}

\long\def\symbolfootnote[#1]#2{\begingroup%
\def\thefootnote{\fnsymbol{footnote}}\footnote[#1]{#2}\endgroup}
\theoremstyle{plain}
\newtheorem{Theorem}{{Theorem}}[section]
\newtheorem{Lemma}[Theorem]{Lemma}
\newtheorem{Proposition}[Theorem]{Proposition}
\newtheorem{Corollary}[Theorem]{Corollary}
\newtheorem{Definition}[Theorem]{Definition}
\newtheorem{Example}[Theorem]{Example}
\newtheorem{Figure}[Theorem]{Figure}
\newtheorem{Conjecture}[Theorem]{Conjecture}

\maketitle

\symbolfootnote[0]{Accepted for publication, \textit{Ars Combinatoria,} November 2009}

\section*{Abstract}
The game of Nim as played on graphs was introduced  in \cite{MR1992342} and extended in \cite{MR1992343} by Masahiko Fukuyama.  His papers detail the calculation of Grundy numbers for graphs under specific circumstances.  We extend these results and introduce the strategy for even cycles.  This paper examines a more general class of graphs by restricting the edge weight to one.  We provide structural conditions for which there exist a winning strategy.  This yields the solution for the complete graph.  

\begin{section}{Background}
The general nontrivial game of Nim is a two-person combinatorial game \cite{MR1808891} consisting of at least three piles of stones where players alternate turns, selecting first a pile from which stones will be removed, and then a strictly positive number of stones to remove.  The game terminates when there are no more stones, and the winner is the player who takes the last stone or stones.  Players must always remove at least one stone, and can only remove stones from a single pile during their turn.

The solution to this general game of Nim is well known and can be found in \cite{MR1808891}.  More interestingly, the solution for Nim can be applied to other two-player combinatorial games.  Masahiko Fukuyama extended Nim to finite graphs by first fixing an undirected graph, assigning to each edge a positive integer, and placing a game position indicator piece at some vertex.  From this indicator piece, the game begins and proceeds with alternate moves from two players according to the following rules.  First, a player chooses an edge incident with the piece.  The player decreases the value of this edge to any non-negative integer and moves the indicator piece to the adjacent vertex along this edge.  Game play ends when a player is unable to move since the value of each edge incident with the piece equals zero.  The player unable to move is the loser of the game \cite{MR1992342}.  Nim on graphs differs from the general game in that it might not be the case that all weight is removed from the graph.  We assume that a player with a winning strategy would choose to use it.

Very few results are known for Nim on graphs, adding to its appeal.  Also, the strategies employed for ordinary Nim are not applicable to Nim on graphs.  We will note some fundamental results of Nim on graphs following the definitions.  In Section 2, we improve upon Fukuyama's result by showing the unique winning strategy for even cycles. Next in Section 3 we define a structure and strategy that leads to a first player victory.  Then in Section 4 we show that the presence of this structure yields a first player win. This leads to the solution of the complete graph when each edge is given a weight of one.  

\begin{subsection}*{Definitions}

The graphs we will consider are finite and undirected with no multiple edges or loops.  We will often want to label the vertices and edges.  When we do, the edge between vertex $v_i$ and $v_j$ will be denoted $e_{ij}$.  Graph theory terminology, including path, vertex degree, and graph isomorphism, will be assumed as found in \cite{MR2107429}.

\begin{Definition}
Given a graph $G$ with edge set $E(G)$ and vertex set $V(G)$ we will call the non-negative integer value assigned to each $e \in E(G)$ the \textbf{weight} of the edge and denote the weight of edge $e_{ij}$ by $\omega(e_{ij})$. 
\end{Definition}

For any graph $G$ we assume  $\omega(e_{ij}) \neq 0$ for all $e_{ij} \in E(G)$ at the start of a game.  When an edge is decreased to $\omega(e)=0$ we will delete it from the graph entirely, since it is no longer a playable edge.  Given a game graph $G$ with weight assignment $\omega_G (e)$, denote by $P_1$ the first player to move from the starting vertex, and denote by $P_2$ the player to move after $P_1$.  The indicator piece $\Delta$ denotes the vertex from which a player is to move.  We will always enumerate vertices in such a way that $\Delta$ is on $v_1$ at the start of a game.

\begin{Definition}
For either player and from a given position $\Delta$ on vertex $v_j$, we define the set of vertices to which a player may legally move to from $\Delta$ the \textbf{options} of the player.  The options of player $i$ at vertex $v_j$ will be denoted by $O(P_i, v_j)$.
\end{Definition}

Certainly for a vertex to exist in the set of options the incident edge must be adjacent to $\Delta$.  Thus $O(P_i, v_j)=\{v_k \in V(G) : \Delta = v_j; \ e_{jk} \in E(G);  \ \omega(e_{jk}) \neq 0\}$.  We will omit $v_j$ when the position of $\Delta$ is apparent.

\begin{Definition}
We will say that a pair of $P_i$'s options are \textbf{isomorphic} if given two options, $v_j, v_k \in O(P_i, v_i)$, there exists a graph isomorphism between $v_j$ and its neighbors and $v_k$ and its neighbors.  We will say that two options are \textbf{identical} if in addition to being isomorphic, the options also have the same weight assignment.
\end{Definition}

Notice that the definition of isomorphic requires that the vertices of isomorphic options have the same degree, and that there is a bijection between the options of the vertices in the set of isomorphic options (see Example 1.4).  

\vspace{.25in}

\begin{Example}

The options at $\Delta$ are isomorphic but not identical.  

\end{Example}

\begin{center}
$
\xygraph{ 
!{(-1,2) }*+{\Delta}="a" 
!{(1,2) }*+{\bullet}="b" 
!{(2,1) }*+{\bullet}="c" 
!{(0,0)}*+{\bullet}="d"
!{(-2,1)}*+{\bullet}="e" 
"a"-"e"_{2} 
"a"-"d"_(.8){5}
"b"-"c"^{3}
"b"-"d"^(.2){2}
"b"-"e"_(.3){3}
"c"-"d"^{4}
"c"-"e"_{3}
} 
$
\end{center}

When we refer to a player winning a graph we precisely mean that a player can win a game played on that graph under the specified weight assignment and starting at a specified vertex.  If no starting vertex is specified, it is true that a player can win that graph starting at any vertex.  When we say that a player is on an odd path or that a player has an odd path option, we mean that there is an odd path from $\Delta$ to a vertex of degree one.  However, when we say even path, we precisely mean that all options from $\Delta$ to a vertex of degree one are even paths.   Notice that if $G$ is an odd path itself, there is an odd path option at any vertex, hence there is no loss of generality by not specifying the position of $\Delta$.  Such is not the case in $G$ is an even path, since it is possible to position $\Delta$ on vertices of $G$ in which both options are indeed odd paths.

\begin{Definition}
From a particular position, if the first player to move can win for any of the second player's moves, we call this position a {\bf{p-position}}.  If the second player to move from this position can win for any of the first player's moves, we call this a \bf{0-position} \cite{MR1992342}.
\end{Definition}

The terms $p$-position and $0$-position come from the \textit{positive} and \textit{zero} Grundy number of that particular position \cite{MR1808891}.  Grundy numbers are used heavily in many areas of two-person combinatorial game theory.  Two properties especially important to keep in mind are that when a player is on a $0$-position all moves are to $p$-positions, and that when a player is on a $p$-position there is always at least one move to a $0$-position.  Essentially, this means that a player with an advantage at the beginning can keep it with a winning strategy.  Any position on an odd path is a $p$-position for $P_1$, as is any position on an odd cycle.  Starting at either vertex of degree one on an even path is a $0$-position for $P_1$, and starting at any vertex that leaves two even paths from $\Delta$ is also a $0$-position for $P_1$.  However, as noted above, if $\Delta$ started on a vertex that leaves two odd paths on this even path, the position is a $0$-position for the first player.

The Grundy number of a position in ordinary Nim not only told which player has an advantage at any given position, it also told that player what move to make when the Grundy number was positive.  This is not the case with Nim on graphs.  Knowing that you can win with Grundy number calculations does not tell you what strategies should be employed to defeat your opponent.  The calculations of the Grundy numbers for trees, paths, cycles, and certain bipartite graphs can be found in \cite{MR1992342, MR1992343}.

\end{subsection}

\begin{subsection}*{Paths and Cycles}

The first player will win an odd path from any starting position since any vertex on an odd path has an odd path option.  The strategy for the first player is to remove all weight from the edge on the odd path option.  This leaves $P_2$ on an even path at a vertex of degree one, which as mentioned previously, is a $0$-position for $P_2$.  On the other hand, the second player will win when starting from a vertex in which every option is even path to a vertex of degree one  (Example 1.6).
  
\begin{Example}
Starting out at $v_1$ we can see that the first player will move along the player's only choice of edges, once again, removing the entire edge. \\
\begin{center}
$
\xygraph{ 
!{(0,0) }*+{\Delta_{v_1}}="a" 
!{(1,0) }*+{\bullet_{v_2}}="b" 
!{(2,0) }*+{\bullet_{v_3}}="c" 
!{(3,0)}*+{\bullet_{v_4}}="d"
!{(4,0)}*+{\ldots}="e" 
!{(5,0)}*+{\bullet_{v_{2n+1}}}="f"
"a"-"b" 
"b"-"c"  
"c"-"d" 
"d"-"e"
"e"-"f"
} 
$

$\mbox{Player 1's move}$

\end{center}

If the first player did not remove the entire edge, it would be a faster victory for the second player.

\begin{center}
$
\xygraph{ 
!{(0,0) }*+{\bullet_{v_1}}="a" 
!{(1,0) }*+{\Delta_{v_2}}="b" 
!{(2,0) }*+{\bullet_{v_3}}="c" 
!{(3,0)}*+{\bullet_{v_4}}="d"
!{(4,0)}*+{\ldots}="e" 
!{(5,0)}*+{\bullet_{v_{2n+1}}}="f"
"a"-"b" 
"b"-"c"  
"c"-"d" 
"d"-"e"
"e"-"f"
}
$

$\mbox{Player 2's move}$

\end{center}
The second player is now on an odd path either way the second player decides to move.
\end{Example}

Moving from paths to cycles we see that the first player to start from an odd cycle can always win.  When considering an odd cycle, $P_1$ will move along either edge, removing all of the weight.  This will leave $P_2$ on an even path.  Since the second player to move from an even path can always win, the first player to move from an odd cycle can always win.

The strategy for either player on an odd cycle contrasts greatly with the strategy for the even cycle.   In \cite{MR1992343}, Fukuyama calculated the Grundy number of even cycles.  However, the calculation does not lend itself to a winning strategy for the player with the advantage. If the first player to move on an even cycle removed all weight on an edge, the first player has left the second player on an odd path.  In fact, both players would want to avoid ``breaking" the even cycle.  Because of this, the player who is able to avoid breaking the even cycle will win.  We explain the strategy for even cycles in Section 2.

\vspace{1in}

\begin{Example}
In the $C_4$ on the left, the first player to move has the advantage.  In the $C_4$ on the right, the second player has the advantage.  
\end{Example}

\begin{center}
$
\xygraph{ 
!{(0,0) }*+{\bullet}="a" 
!{(1,1) }*+{\bullet}="b" 
!{(0,1) }*+{\Delta}="c" 
!{(1,0)}*+{\bullet}="d" 
!{(5,0) }*+{\bullet}="e" 
!{(6,1) }*+{\bullet}="f" 
!{(5,1) }*+{\Delta}="g" 
!{(6,0)}*+{\bullet}="h" 
"c"-"b" ^{3}
"a"-"d" ^{4} 
"a"-"c" ^{4}
"b"-"d" ^{2}
"e"-"h" ^{4} 
"f"-"h" ^{3}
"e"-"g" ^{2}
"f"-"g" _{2}
} 
$
\end{center}

The important part of this result is that the weights of the edges matter for even cycles.  It is not difficult to show that if the weight on each edge equals one, the second player to start on an even cycle will win.  This is used extensively in Section 4. 
\end{subsection}

\end{section}

\begin{section}{Strategy for Even Cycles}
The Grundy number is 0 when $\omega(e) = k$ for all edges in an even cycle and for any $k \geq 0$, hence the second player to start has the advantage \cite{MR1992343}.  Consider first the strategy for $P_2$ when $\omega(e) = 2$ for all edges on an arbitrary even cycle.

From any starting position and for either edge, $P_1$ only has the choice of reducing that edge to a weight of 1 or 0.  Notice that $P_1$ would not want to make an odd path for $P_2$ by reducing to 0.  Thus assume without loss of generality that $P_1$ moves to $v_2$ and reduces $e_{12}$ to $\omega(e_{12}) = 1$.  Then $P_2$'s next $0$-position option is to move to $v_3$ leaving $\omega(e_{23}) = 1$, since moving back to $v_1$ requires that $P_2$ create an odd path for $P_1$.  Continuing on in this way $P_1$ and $P_2$ will move to $v_{2j}$ and $v_{2j+1}, (1 \leq j \leq n-1)$ respectively until $P_1$ is back at $v_1$ and only an even cycle with $\omega(e) = 1$ for all $e \in E(C_{2n})$, which as mentioned is a $P_2$ victory. 

In the above case, $P_1$ was immediately forced to reduce the weight of an edge beyond the minimum weight of any edge.  Now assume that the weights of the edges on an even cycle are arbitrary.  It will still be the case that neither player wants to break the even cycle, and that the first player forced to decrease a weight below the minimum will lose.  This means we can look at even cycles with arbitrary weighting assignments in the following way:

\begin{Proposition}
Assume $G = C_{2n}$ and that $\omega_G$ is some arbitrary weight assignment for $G$.  Assume $\min_{e\in E(G)}(\omega_G (e)) = m$.  Let $G^{'}$ be the graph formed from $G$ under $\omega_{G^{'}} (e) = \omega_G (e) - m$ with the same starting vertex.  Then the $p$-positions of $G$ are the $p$-positions of $G^{'}$ with the winning strategy for $P_1$ or $P_2$ on $G$ following from that on $G^{'}$.
\end{Proposition}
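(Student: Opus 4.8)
The plan is to set up a bijection between game positions on $G$ and game positions on $G'$ and argue by (strong) induction on the total weight $\sum_{e} \omega(e)$ that corresponding positions have the same outcome, with moves translating across the bijection. The natural correspondence sends a position on $G$ in which the indicator piece sits at some vertex and the current weights are $(\omega(e))_{e \in E(G)}$ to the position on $G'$ with the same vertex and weights $(\omega(e) - m')_{e}$, where $m'$ is the \emph{current} minimum weight over the (surviving) edges. I would first isolate the key structural fact established in the discussion preceding the statement: on an even cycle neither player wishes to ``break'' the cycle, so as long as every edge has weight at least one, a move that zeroes an edge hands the opponent an odd path and loses; consequently optimal play on $C_{2n}$ never reduces an edge below the running minimum until that minimum is forced to $0$, and the first player compelled to go below the running minimum loses. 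This is exactly the behavior already checked by hand in the $\omega \equiv 2$ warm-up.

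The main steps, in order: (1) record the reduction invariant -- at every stage of optimal play the multiset of weights on $G$ is the multiset of weights on $G'$ shifted up by the current common ``floor,'' and the set of surviving edges is the same; (2) show that a legal move on $G$ that respects optimal play (i.e.\ does not drop an edge below the floor when that would create an odd path) corresponds to a legal move on $G'$ and vice versa, so the option sets match under the bijection; (3) handle the boundary case where the floor first hits $0$: at that moment $G$ has literally become $G'$ (every edge has had exactly the original minimum $m$ removed, and on $G$ the edge(s) of original weight $m$ are now $0$, precisely as on $G'$ with its minimum-$0$ edges deleted), so the positions coincide outright and there is nothing left to prove; (4) conclude by induction that a position of $G$ is a $p$-position iff the corresponding position of $G'$ is, and read off the strategy translation -- $P_1$ (or $P_2$) plays on $G$ by consulting the winning strategy on $G'$ and adding $m$ (or the running floor) to whatever weight that strategy prescribes, never touching an edge the $G'$-strategy would not, and mirroring the opponent's non-breaking moves.

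The hard part will be step (2) together with the case analysis hidden in the word ``arbitrary'': I must rule out, or correctly account for, moves that \emph{do} break the cycle. If a player on $G$ zeroes an edge while some edge still has weight $> 0$, that player has created an odd path for the opponent and (by the path/cycle results of Section~1) has lost; the corresponding move on $G'$ either is illegal (if that edge had weight $0$ on $G'$) or likewise zeroes the edge and loses. So such moves are never part of a winning line on either graph, and excluding them from both option sets simultaneously does no harm -- but I need to state this carefully so that the induction's ``for any of the opponent's moves'' quantifier is honored: a losing move for the mover on $G$ must map to a losing move for the mover on $G'$, which it does because the opponent's resulting position is again a matched pair (or a won odd-path position on both sides). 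A secondary subtlety is that when several edges share the minimum weight $m$, subtracting $m$ deletes all of them in $G'$, possibly disconnecting the cycle into several paths; I should check that this only occurs once the floor reaches $0$, i.e.\ it is the terminal situation of step (3), so no additional bookkeeping is needed during the inductive phase. Once these points are nailed down, the proposition follows immediately.
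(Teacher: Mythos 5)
Your plan is sound, and it reaches the proposition by a genuinely different route than the paper. The paper settles the outcome question by citing Fukuyama's Grundy-number formula for even cycles (Proposition 6.2 of \cite{MR1992343}), so that ``$P_1$ wins $G$ iff $G'$ has an odd path option from $\Delta$'' comes essentially for free; the body of the paper's proof is then a move-by-move derivation of the strategy, following the first few moves explicitly, renormalizing by the current minimum after each response of $P_2$ (introducing a graph $G''$ with the new minimum subtracted), splitting into the cases $k>m$, $k=m$, $k<m$, and finally giving a separate argument for uniqueness. You instead propose a self-contained strong induction on total weight with the position correspondence $\omega \mapsto \omega - m'$ (where $m'$ is the running minimum), which proves the outcome equivalence and the strategy translation in one pass and makes the paper's uniqueness addendum automatic: every move not matching a legal winning move of $G'$ is a below-floor or edge-zeroing move, and those lose. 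Your approach buys independence from the citation and a cleaner quantifier structure; its cost is that you must supply the weighted-path outcome facts as the base of the induction (the paper states these in Section 1) and must write out the correspondence carefully. Two details to repair when you do: (i) your remark that several edges sharing the minimum ``only occurs once the floor reaches $0$'' is not right --- if two edges of $G$ both have weight $m$, then $G'$ is disconnected from the very first position; this is harmless, since only the component of $\Delta$ matters for the odd/even path criterion, but it should be stated rather than excluded; (ii) a move that drops an edge below the floor to a \emph{positive} value $w'$ has no legal image in $G'$ at all, so it cannot be disposed of through the bijection --- you must renormalize the resulting cycle by its new minimum $w'$, observe that this yields an odd path rooted at the opponent's vertex, and invoke the inductive hypothesis on that strictly smaller position; this is precisely the $k<m$ case the paper handles with its graph $G''$.
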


\begin{proof}
Note that $G^{'}$ is no longer an even cycle since at least one edge and perhaps all edges of $G$ are deleted under $\omega_{G^{'}}(e)$.  By Proposition 6.2 in \cite{MR1992343} which gives a calculation of the Grundy number of even cycles, the Grundy number of $G$ is determined in part by the Grundy number of $G^{'}.$  As the Grundy number of an odd path is positive and an even path is zero, the first player wins $G$ if there is at least one odd path starting from $\Delta$ in $G^{'},$ and the second player wins $G$ if all paths starting from $\Delta$ in $G^{'}$ are even.

To see that the strategy for playing $G$ follows from that for $G^{'},$ first consider a graph with a positive Grundy number.  On an odd path, we know that $P_1$ removes all weight on the incident edge.  Since the Grundy number of $G$ is positive, so is the Grundy number of $G^{'}$.  Hence $G^{'}$ contains an odd path.  The previous paragraph implies that $P_1$ will move in the direction of the odd path in $G^{'}$ decreasing the weight of $e_{12}$ to zero.  In $G$, this corresponds to a move from $v_1$ to $v_2$ and a decrease of $\omega_G (e_{12})$ by $\omega_{G^{'}} (e_{12})$ to $m$ since $\omega_{G^{'}}(e) = \omega_{G}(e) - m$ for all $e \in E(G)$.  

First assume that $P_2$ moves back to $v_1$.  Following this move, $\omega_{G} (e_{12}) = m^{'} < m$ and we can now compare the strategy for $P_1$ to the strategy for some graph $G^{''}$ formed from $G$ with $\omega_{G^{''}} (e) = \omega_{G} (e) - m^{'}$ where $G$ has been played two moves (see Example 2.2).  Since $G^{''}$ is an odd path of length $2n-1$ the first player has a winning strategy.

\begin{Example}  Below are graphs of $G$ and $G^{'}$ at the start of a game on even cycles.  In this game $m = 2$, and since an odd path exists in $G^{'}$ we have a winning strategy for $P_1$.

\begin{center}
$\xygraph{
!{(-.5,.5)}*+{G=}
!{(3.5,.5)}*+{G^{'}=}
!{(0,1) }*+{\Delta}="a" 
!{(1,1) }*+{\bullet}="b" 
!{(2,1) }*+{\bullet}="c" 
!{(2,0) }*+{\bullet}="d" 
!{(1,0) }*+{\bullet}="e" 
!{(0,0) }*+{\bullet}="f" 
!{(4,1) }*+{\Delta}="g" 
!{(5,1) }*+{\bullet}="h" 
!{(6,1) }*+{\bullet}="i" 
!{(6,0) }*+{\bullet}="j" 
!{(5,0) }*+{\bullet}="k" 
!{(4,0) }*+{\bullet}="l" 
"a"-"b"^{6}
"b"-"c"^{5}
"c"-"d"^{6}
"d"-"e"^{2}
"e"-"f"^{4}
"a"-"f"^{5}
"g"-"h"^{4}
"h"-"i"^{3}
"i"-"j"^{4}
"k"-"l"^{2}
"g"-"l"^{3}
}
$
\end{center}

In the case that $P_2$ goes back to $v_1$ lowering the weight of the edge beyond two, we have the following graphs $G$ and $G^{''}$.  Notice that in $G^{''}$  there is an odd path of length $2n-1$ since $\min_{e \in E(G)}(\omega(e)) = 1$ after the first two moves.

\begin{center}
$\xygraph{
!{(-.5,.5)}*+{G=}
!{(3.5,.5)}*+{G^{''}=}
!{(0,1) }*+{\Delta}="a" 
!{(1,1) }*+{\bullet}="b" 
!{(2,1) }*+{\bullet}="c" 
!{(2,0) }*+{\bullet}="d" 
!{(1,0) }*+{\bullet}="e" 
!{(0,0) }*+{\bullet}="f" 
!{(4,1) }*+{\Delta}="g" 
!{(5,1) }*+{\bullet}="h" 
!{(6,1) }*+{\bullet}="i" 
!{(6,0) }*+{\bullet}="j" 
!{(5,0) }*+{\bullet}="k" 
!{(4,0) }*+{\bullet}="l" 
"a"-"b"^{1}
"b"-"c"^{5}
"c"-"d"^{6}
"d"-"e"^{2}
"e"-"f"^{4}
"a"-"f"^{5}
"h"-"i"^{4}
"i"-"j"^{5}
"j"-"k"^{1}
"k"-"l"^{3}
"g"-"l"^{4}
}
$
\end{center}

\end{Example}

Now assume that $P_2$ moves to $v_3$ and sets $\omega(e_{23})$ to $k$.  If $k > m$ then we know from $G^{'}$ that $P1$ moves back to $v_2$ setting $\omega(e_{23}) = m$.  Since $P_2$ is on an even path in $G^{'}$, the first player will win. If $k = m$, then $P_1$ still has an odd path in $G^{'}$ and thus will win $G$. Finally if $k < m$ then $k$ is the new minimum weight and there exists $G^{''}$ with $\omega_{G^{''}} (e) = \omega_{G} - k$ that is an odd path of length $2n-1$ for $P_1$.  In any case, the strategy for $P_1$ follows that for a graph with the lowest weight removed from every edge.  

When the Grundy number of $G^{'}$ is zero, $P_2$ mimics the strategy of $P_1$ above.

To establish the uniqueness of this strategy, we must show that any move except one to reduce the edge weight to $m$ on the odd path option results in a loss for the player who began on a $p$-position.  In fact, the strategy holds at every stage of game play.

Assume $P_1$ begins the game on a $p$-position and let $m$ be the minimum weight of any edge of $G = C_{2n}$ and $\Delta = v_1$ as before.  There exists an odd path option in $G^{'}$, the graph formed from $G$ under $\omega_{G^{'}}(e) = \omega_{G}(e) - m$ for all $e \in E(G)$.  Since taking an even path option results in a loss for $P_1$ by the above arguments, we assume that $P_1$ takes an odd path option.

Suppose that $P_1$ does not reduce the weight of $e_{12}$ to $m$.  We consider first the case when $\omega(e_{12}) = m^{'}$ for $0 \leq m^{'} < m$ following $P_1$'s move.  With $\Delta = v_2$ and $P_2$'s turn, we can look at a graph $G^{''}$ formed from $G$ under $\omega_{G^{''}}(e) = \omega_{G}(e) - m^{'}$.  Since $m^{'} < m$ we have that $G^{''}$ is a path of length $2n-1$.  Now $P_2$ may move along $G$ in the direction of the odd path in $G^{''}$ reducing the edges to $m^{'}$ as play progresses for the win.  Thus $P_1$ reducing any edge below $m$ results in a loss of advantage and a $P_2$ win.

Now suppose that $P_1$ reduces $e_{12}$ to $m^{''}$ for $m < m^{''}$ if possible, and that only one odd path option exists.  If $\omega(e_{12}) = m + 1$ then we have nothing to show at this step, and if there are two odd path options, we will simply repeat this following argument a second time.  With $\Delta = v_2$ and $P_2$'s turn we will let $P_2$ move back to $v_1$ reducing the weight of $e_{12}$ from $m^{''}$ to $m$.  In doing this, $P_2$ has left $P_1$ on an even or trivial path in the graph $G^{'}$ formed under the weight assignment $\omega_{G^{'}}(e) = \omega_{G}(e) - m$ after two moves on $G$.  Since this is a $0$-position for $P_1$, we have that $P_2$ now holds the winning strategy.  Thus using any other strategy on even cycles shifts the advantage to the player who originally started in a $0$-position.

\end{proof}
Once the minimum weight is removed from each of the edges, it becomes clear that $P_1$ will win if there is an odd path option from the starting vertex.  In the same way we know that $P_2$ will win if all first player options from the starting vertex are even paths in $G^{'}$ (Example 1.7). 

\end{section}

\begin{section}{A Structure Theorem}
\begin{Theorem}
Let $G = K_{2,j}$ for $j \geq 1$ and $\omega(e) = 1$ for each $e \in K_{2,j}$.  Assume that $\Delta$ is on a vertex in the partite set of size 2.  Then $P_2$ will always win the $K_{2,j}$.
\end{Theorem}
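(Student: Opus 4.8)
The plan is to exhibit an explicit pairing strategy for $P_2$ based on the two vertices $u_1,u_2$ of the small partite set. Label the large partite set $w_1,\dots,w_j$, so every edge is some $e(u_a,w_b)$ with $\omega\equiv 1$; since all weights are one, a move along an edge simply deletes that edge and relocates $\Delta$. The piece starts at, say, $u_1$. The key observation I would use is that $K_{2,j}$ is an edge-disjoint union of $j$ paths of length $2$, namely $u_1 w_b u_2$ for $b=1,\dots,j$, and that from either $u_a$ the piece can only travel to some $w_b$, and from $w_b$ only to $u_1$ or $u_2$. So the game decomposes into a ``cycle-like'' interaction among these cherries, and I want to show $P_2$ can always restore balance.

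First I would set up the invariant: after each of $P_2$'s moves, the piece sits on some $u_a$, and I claim $P_2$ can guarantee this. Whenever $P_1$ moves $\Delta$ from $u_a$ to $w_b$ (deleting $e(u_a,w_b)$), the edge $e(u_{a'},w_b)$ to the other small-set vertex $u_{a'}$ is still present unless it was used before — I would track this carefully — so $P_2$ responds by moving $\Delta$ from $w_b$ to $u_{a'}$, deleting $e(u_{a'},w_b)$ and thereby eliminating $w_b$ from play entirely (it now has degree $0$). Thus each ``round'' (a $P_1$ move followed by the paired $P_2$ move) consumes exactly one $w_b$ together with both its edges, and returns $\Delta$ to the small partite set. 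Since there are $j$ vertices $w_b$ and hence $j$ such rounds, and $P_2$ always has a reply, $P_1$ is the first player left with no move: when the last $w_b$ is consumed, $\Delta$ is on a $u$-vertex all of whose incident edges are gone. Hence $P_2$ wins.

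The main obstacle — the one step I would be careful about — is verifying that $P_2$'s paired reply is \emph{always legal}, i.e. that when $P_1$ moves into $w_b$ the partner edge $e(u_{a'},w_b)$ has not already been removed. This is where the invariant does the work: because $P_2$ always exits a $w$-vertex by deleting its \emph{second} remaining edge, no $w$-vertex is ever left in a ``half-used'' state (exactly one edge gone) at the end of a round; each $w_b$ is either untouched or completely gone. Since $P_1$ always moves from a $u$-vertex into a fresh (fully intact) $w_b$, both edges at $w_b$ are present, so $P_2$'s reply is available. I would also need the small boundary remark that for $j=1$ the graph is a single path $u_1 w_1 u_2$ of length two, an even path with $\Delta$ at a degree-one vertex, which is a $0$-position for $P_1$ by the path results of Section~1 — consistent with the general argument (one round, then $P_1$ is stuck). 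Finally I would note this does not even require $P_1$ to play ``sensibly'': the pairing answers \emph{every} legal $P_1$ move, so the conclusion holds unconditionally.
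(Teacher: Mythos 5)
Your proof is correct and rests on the same core observation as the paper's: each two-move round ($P_1$ from a small-set vertex into some $w_b$, followed by $P_2$'s forced exit along $w_b$'s only remaining edge) completely consumes one vertex of the large partite set and returns $\Delta$ to the small partite set. The paper packages this round as the inductive step of an induction on $j$ (reducing $K_{2,j+1}$ to $K_{2,j}$), whereas you unroll it into a direct counting argument with an explicit invariant; the content is essentially identical.
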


\begin{proof}
We proceed by induction on $j$.  Enumerate the vertices in the following way:  Let $\Delta = v_1$  and $v_2$ be the other vertex in the partite set of size 2.  Enumerate the vertices in the partition of size $j$ by $v_3, v_4, \ldots, v_{j+2}$.  

For $j=1$ we have an even path.  By previous work, this is a win for $P_2$.  Similarly, for $j = 2$ we have an even cycle in which each edge has $\omega(e)=1$ which we have also seen to be a win for $P_2$.  Now assume that this is true for all complete $K_{2,i}$ for $i \leq j$.  Consider the $K_{2,j+1}$ with $\Delta$ on $v_1$ in the partition of size 2.  Notice that all of $P_1$'s moves are identical since $O(P_1, \Delta = v_1) = \{v_3, v_4, \ldots, v_{j+3}\}$, all incident edges have weight 1, and $d(v_i) = 2$ for $3 \leq i \leq j+3$.

Without loss of generality, assume that $P_1$ moves to $v_3$.  Since $e_{13}$ is now gone, as $\omega(e_{13}) = 1$ at the start, $P_2$ only has one move, namely to $v_2$.  Now with $\Delta$ on $v_2$ and both players unable to move to $v_3$, we have $P_1$ on a $K_{2,j}$ (Figure 3.2).

\begin{Figure}

\begin{center}
$
\xygraph{
!{(2.5,2) }*+{\bullet^{v_1}}="a"
!{(5.88,2) }*+{\Delta^{v_2}}="b"
!{(8.4,0) }*+{\bullet_{v_3}}="c"
!{(7.56,-.5) }*+{\bullet_{v_4}}="d"
!{(6.72,-.7) }*+{\bullet_{v_5}}="e"
!{(5.88,-.85) }*+{\bullet_{v_6}}="f"
!{(5,-1) }*+{\bullet_{v_7}}="g"
!{(4.2,-1) }*+{\bullet_{v_8}}="h"
!{(3.36,-.85) }*+{\bullet_{v_9}}="i"
!{(2.5,-.7) }*+{\bullet_{v_{10}}}="j"
!{(0,0) }*+{\bullet_{v_{j+3}}}="l" 
!{(2,-.65) }*+{\cdot}
!{(1.2,-.45) }*+{\cdot}
!{(.4,-.2) }*+{\cdot}
!{(1.6,-.55) }*+{\cdot}
!{(.8,-.35) }*+{\cdot} 
"a"-"d"
"a"-"e"
"a"-"f"
"a"-"g"
"a"-"h"
"a"-"i"
"a"-"j"
"a"-"l"
"b"-"d"
"b"-"e"
"b"-"f"
"b"-"g"
"b"-"h"
"b"-"i"
"b"-"j"
"b"-"l"
}
$
\end{center}

Here we have $G = K_{2, j+1}$ after the first two moves which isolates a vertex leaving a $K_{2,j}$.  

\end{Figure}

By our inductive assumption, the second player will win the $K_{2,j}$. Hence $P_2$ wins the $K_{2,j}$ for all $j \geq 1$ and $\Delta$ on a vertex in the partition of size 2.
\end{proof}

Now consider the $K_{2,j} + e_{12}$ with $\Delta$ still on a vertex in the partite set of size 2 and the same vertex enumeration as above.

\begin{Figure}

\begin{center}
$
\xygraph{
!{(2.5,2) }*+{\Delta^{v_1}}="a"
!{(5.88,2) }*+{\bullet^{v_2}}="b"
!{(8.4,0) }*+{\bullet_{v_3}}="c"
!{(7.56,-.5) }*+{\bullet_{v_4}}="d"
!{(6.72,-.7) }*+{\bullet_{v_5}}="e"
!{(5.88,-.85) }*+{\bullet_{v_6}}="f"
!{(5,-1) }*+{\bullet_{v_7}}="g"
!{(4.2,-1) }*+{\bullet_{v_8}}="h"
!{(3.36,-.85) }*+{\bullet_{v_9}}="i"
!{(2.5,-.7) }*+{\bullet_{v_{10}}}="j"
!{(0,0) }*+{\bullet_{v_{j+2}}}="l" 
!{(2,-.65) }*+{\cdot}
!{(1.2,-.45) }*+{\cdot}
!{(.4,-.2) }*+{\cdot}
!{(1.6,-.55) }*+{\cdot}
!{(.8,-.35) }*+{\cdot}
"a"-"b"
"a"-"c"
"a"-"d"
"a"-"e"
"a"-"f"
"a"-"g"
"a"-"h"
"a"-"i"
"a"-"j"
"a"-"l"
"b"-"c"
"b"-"d"
"b"-"e"
"b"-"f"
"b"-"g"
"b"-"h"
"b"-"i"
"b"-"j"
"b"-"l"
}
$
\end{center}

\end{Figure}

We will call this the $SSB_j$ graph of order $j$ (Figure 3.3).  When the order of the graph is understood or insignificant, we will simply write $SSB$.  Removing $e_{12}$ on the first move yields a $K_{2,j}$ with $\Delta$ on $v_2$.  This lends itself to the following corollary:

\begin{Corollary}
The first player will win the $SSB_j$ for any $j$ when $\omega(e) = 1$ for all $e \in E(SSB_j)$ and $\Delta$ is on $v_1$ or $v_2$.
\end{Corollary}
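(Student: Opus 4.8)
The plan is to have the first player destroy the extra edge $e_{12}$ on the opening move and then appeal directly to Theorem 3.1. First I would reduce to the case $\Delta = v_1$: the map interchanging $v_1$ and $v_2$ while fixing each $v_i$ with $i \geq 3$ is a graph automorphism of $SSB_j$, and since every edge carries weight $1$ it also preserves the weight assignment, so a winning strategy from $v_1$ transports to one from $v_2$. Thus it suffices to treat $\Delta = v_1$.

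Next, with $\Delta = v_1$, I would have $P_1$ play the edge $e_{12}$, reducing $\omega(e_{12}) = 1$ to $0$ and moving $\Delta$ to $v_2$. Since $e_{12}$ is now deleted, the edges that remain are exactly $\{ e_{1k}, e_{2k} : 3 \leq k \leq j+2 \}$, each still of weight $1$; that is, the resulting position is precisely a $K_{2,j}$ with all edge weights equal to $1$ and $\Delta$ sitting on $v_2$, a vertex of the partite set of size $2$. This is the graph already noted just before the corollary.

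The crux is the turn bookkeeping. After $P_1$'s opening move it is $P_2$'s turn, so $P_2$ is the \emph{first} player to move in this $K_{2,j}$ position. By Theorem 3.1, the second player to move in such a $K_{2,j}$ (all weights $1$, $\Delta$ in the size-$2$ part) wins, and that second player is the original $P_1$. Hence, whatever $P_2$ does after the opening move, $P_1$ has a winning reply, and so $P_1$ wins $SSB_j$.

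I do not expect a genuine obstacle here: the corollary is essentially immediate once one checks that the opening move leaves exactly the graph covered by Theorem 3.1 and that the parity of turns lines up. This is also why the hypothesis $j \geq 1$ matters, so that the resulting $K_{2,j}$ is non-degenerate and Theorem 3.1 applies (its base cases $j = 1$ and $j = 2$ being the unit-weight even path and even cycle already handled). The only point requiring a word of care is that $P_1$ genuinely forces this line, but since the opening move is $P_1$'s own choice there is nothing for $P_2$ to interfere with before Theorem 3.1 takes over.
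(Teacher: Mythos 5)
Your proposal is correct and follows exactly the paper's argument: $P_1$ deletes $e_{12}$ on the opening move, leaving $P_2$ as the first to move on a unit-weight $K_{2,j}$ with $\Delta$ in the partite set of size two, and Theorem 3.1 then gives the win to the original $P_1$. The extra details you supply (the $v_1 \leftrightarrow v_2$ symmetry and the turn parity check) are sound but implicit in the paper's one-line proof.
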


\begin{proof} The first player removes $e_{12}$ and lets $P_2$ start on the $K_{2,j}$ with $\Delta$ on a vertex in the partite set of size two, guaranteeing $P_1$ the win by the previous theorem.  
\end{proof}

It is not the case that $P_1$ will always win the $SSB$ if $\omega(e) \neq 1$ for every edge.  The winner can be determined by similar arguments as those for even cycles.  
\end{section}

\begin{section}{The Complete Graph}

In Corollary  2.4, the first player has no option but to move back to either $v_1$ or $v_2$ since all other vertices only have degree 2.  Suppose now that $P_1$ had more options so that the move is not forced back to $v_1$ or $v_2$ in the $SSB$.  We continue to assume $\omega(e) = 1$ but give $P_1$ more options by adding edges between the vertices in the partition of size $j$ in the $SSB$.  We show next that additional edges do not affect a player's strategy to play the $SSB$ when such a structure exists as a subgraph.

\begin{Lemma}
Assume that $G = K_n$ and that $\omega(e) = 1$ for all $e \in E(G)$.  Then $P_1$ can force $P_2$ to move within the confines of an $SSB_{n-2}$ contained in $K_n$.
\end{Lemma}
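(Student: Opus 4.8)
My plan is to single out one specific copy of $SSB_{n-2}$ inside $K_n$ and hand $P_1$ a simple ``escorting'' strategy that never lets $P_2$ step off it. Take $v_1$ and $v_2$ to be the distinguished pair, so the chosen $SSB_{n-2}$ consists of the edge $e_{12}$ together with all spokes $e_{1k},e_{2k}$ for $3\le k\le n$; the edges of $K_n$ lying outside it are precisely the $\binom{n-2}{2}$ chords joining two vertices of $\{v_3,\dots,v_n\}$. I will show $P_1$ can play so that \emph{every} move $P_2$ makes is along an edge of this fixed $SSB_{n-2}$. (For $n\le 3$ there is nothing to prove, since $K_2=SSB_0$ and $K_3=SSB_1$, so one may assume $n\ge 4$.)

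The strategy for $P_1$ is this. On the first move, $P_1$ goes from $v_1$ to $v_2$; since every edge has weight $1$, this deletes $e_{12}$. On each later turn, $P_1$ finds the indicator at some non-distinguished vertex $v_k$ onto which $P_2$ has just moved from a distinguished vertex $v_s$, and responds by moving from $v_k$ to the \emph{other} distinguished vertex $v_{s'}$, deleting $e_{s'k}$.

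The analysis turns on an invariant, to hold at the start of each of $P_2$'s turns: (i) the indicator is on a distinguished vertex; (ii) $e_{12}$ has been deleted; and (iii) for every $k\in\{3,\dots,n\}$ the two spokes $e_{1k}$ and $e_{2k}$ are either both present or both deleted. This is true immediately after $P_1$'s opening move. Assume it at a $P_2$-turn with the indicator on $v_s$. The only edges still incident with $v_s$ are spokes $e_{sk}$ (the edge $e_{ss'}=e_{12}$ being gone), so whatever $P_2$ does is along a spoke of the $SSB_{n-2}$ and lands on a non-distinguished $v_k$ with $e_{sk}$ present; by (iii), $e_{s'k}$ is present too, so $P_1$'s prescribed reply is legal. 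After that reply both $e_{sk}$ and $e_{s'k}$ are gone, which re-establishes (iii) at $v_k$ and leaves it untouched elsewhere, $e_{12}$ is still gone, and the indicator is back on the distinguished vertex $v_{s'}$; so the invariant holds again. By induction on move number it holds for the rest of the game (and if at some $P_2$-turn $v_s$ has become isolated, $P_2$ simply cannot move, which is still consistent with the claim). Since $P_2$ therefore only ever traverses spokes, $P_1$ has confined $P_2$ to the chosen $SSB_{n-2}$.

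I expect clause (iii) to be the crux: it is exactly what certifies that after $P_2$ burns one spoke at a vertex, the mate spoke survives for $P_1$'s return, so $P_1$ is never stranded at a non-distinguished vertex. The rest is bookkeeping — the chords among $\{v_3,\dots,v_n\}$ are never incident with the indicator on a $P_2$-turn, and each non-distinguished vertex, once both its spokes are deleted, is cut off from $v_1$ and $v_2$ and plays no further part. I would also remark (not needed for the present statement) that $P_1$'s escorting moves keep $P_1$ on the $SSB_{n-2}$ as well, so the game is in effect the one played on $SSB_{n-2}$ from $v_1$ with $P_1$ to move; together with the corollary that $P_1$ wins $SSB_j$, this is what will give the first-player win on $K_n$.
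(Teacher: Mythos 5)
Your proposal is correct and follows essentially the same route as the paper: $P_1$ opens with $v_1\to v_2$ and thereafter always returns from $P_2$'s landing vertex to the other distinguished vertex, so that $P_2$ only ever leaves $v_1$ or $v_2$ and hence only ever traverses $SSB$ edges. Your explicit invariant (iii) is just a cleaner formalization of the paper's observation that the spokes $e_{1k},e_{2k}$ are consumed in pairs, so the two arguments coincide in substance.
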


\begin{proof}
Assume $G=K_n$ with $\Delta = v_1$ and $\omega(e) = 1$ for all $e \in G$.  Then all of $P_1$'s moves are identical.  Without loss of generality, assume that $P_1$ moves from $v_1$ to $v_2$.  

Then we have $O(P_2, v_2) = \{v_3, v_4, \ldots, v_n\}$ and each option is identical.  So assume without loss of generality that $P_2$ moves to $v_3$.  With $P_1$ on $\Delta = v_3$ there are two non-isomorphic moves for $P_1$.  One of these is to move to $v_1$ and the other is to move to one of the $v_4, v_5, \ldots, v_n$.  Since we want to show that $P_1$ can move along the $SSB$, he would naturally choose the $v_1$ option.

Now $O(P_2, v_1) = \{v_4, v_5, \ldots, v_n\}$ and all of these moves are identical.   Assume that $P_2$ moves to $v_4$.  Then since $v_2 \in O(P_1, v_4)$ we know $P_1$, in keeping with the strategy to move along the $SSB$, will choose to move to $v_2$.

Continuing on in this manner we will have that $v_1 \notin O(P_2, v_2)$, $v_2 \notin O(P_2, v_1)$ since $e_{12}$ was the first edge removed.  In general, every option at every move is identical for $P_2$.  Since $v_1 \in O(P_1, \Delta = v_i)$ for all $v_i \in O(P_2, v_2)$ and $v_2 \in O(P_1, v_j)$ for all $v_j \in O(P_2, v_1)$, $P_1$ is able to choose to move along the $SSB$.  

Keeping up game play in this fashion, i.e., $P_1$ choosing to move to whichever of the $v_1$ or $v_2$ options exist in $O(P_1, \Delta)$ and $P_2$'s moves identical, we will exhaust the edges incident with $v_1$ and $v_2$ leaving $P_2$ on an isolated vertex.  Precisely, if $n$ is even, $P_2$ will be stuck on $v_2$, and if $n$ is odd, $P_2$ will be stuck on $v_1$.
\end{proof}

Notice that since $P_1$ never opted to use any edges outside of the $SSB$, the existence of those edges did not affect the strategy of $P_1$.  We will call the technique of $P_1$ continually choosing to move to $v_1$ or $v_2$ from $\Delta$ the \textbf{SSB strategy} and employ this strategy in Theorem 4.4 below.

\begin{Definition}
We say two distinct vertices are \textbf{mutually adjacent} if they have the same set of neighbors and are neighbors themselves.
\end{Definition}

\begin{Definition}
If two adjacent vertices of degree $k+1$ have $k$ common neighbors, we will call them \textbf{k-mutually adjacent}.
\end{Definition}

Thus saying a graph contains two $k$-mutually adjacent vertices implies that the graph contains an $SSB$ subgraph of order $k$.  We will also speak of vertices that are $k$-mutually adjacent without being adjacent to each other.  Notice that this implies the graph contains a $K_{2,k}$ subgraph.

\begin{Theorem}
Let $G$ be a graph with $\omega(e) = 1$ for all $e \in E(G)$.  If there exists at least two mutually adjacent vertices in $G$ with $\Delta$ at one such vertex, then $P_1$ will win $G$.
\end{Theorem}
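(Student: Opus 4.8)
The plan is to reduce the general statement to the $SSB$ situation handled by Lemma~4.3 and Corollary~2.4. Suppose $v_1$ and $v_2$ are mutually adjacent in $G$ with $\Delta = v_1$, and let $N$ be their common neighborhood, so $|N| = k \geq 0$ and the edge $e_{12}$ is present. The key observation is that the subgraph induced on $\{v_1, v_2\} \cup N$ contains an $SSB_k$ (indeed $K_{2,k} + e_{12}$) as a \emph{spanning} subgraph on those vertices, and — crucially — because $v_1$ and $v_2$ have \emph{exactly} the same neighbor set, no vertex of $G$ outside $\{v_1,v_2\}\cup N$ is adjacent to either $v_1$ or $v_2$. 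So the only edges of $G$ incident to $v_1$ or $v_2$ are those of the $SSB_k$ structure, possibly together with extra edges among the vertices of $N$ (and edges from $N$ to the rest of $G$).

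The first step is to have $P_1$ play the \textbf{SSB strategy}: on the first move $P_1$ deletes $e_{12}$ and moves to $v_2$; thereafter, whenever $\Delta$ sits on a vertex $v \in N$, $P_1$ moves back to whichever of $v_1, v_2$ still has a live edge to $v$. I would verify, exactly as in the proof of Lemma~4.3, that this is always possible: $P_2$, finding $\Delta$ on $v_1$ or $v_2$, is forced to move into $N$ (the edge $e_{12}$ is already gone), and from any $v\in N$ the vertex $v_1$ or $v_2$ that $P_2$ did not just come from is still available to $P_1$ (its edge to $v$ has not been used). Since every such round consumes two edges of the $SSB_k$ and $P_1$ always has a reply, the edges incident to $v_1,v_2$ are exhausted on $P_2$'s turn, leaving $P_2$ stranded on an isolated copy of $v_1$ or $v_2$ — so $P_1$ wins. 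This is precisely the content of Corollary~2.4 adapted to the case where $N$ carries extra internal structure; the point, already noted after Lemma~4.3, is that $P_1$ never uses an edge outside the $SSB$, so the extra edges among $N$ and from $N$ outward are irrelevant to $P_1$'s ability to execute the strategy.

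The main obstacle — and the step I would write out most carefully — is confirming that $P_1$ is never the player who runs out of moves, i.e. that the parity works out regardless of what $P_2$ does. In Lemma~4.3 this was clean because every vertex of $N$ had degree exactly $2$, so $P_2$ had no choice; here $P_2$ may try to escape into the rest of $G$ from a vertex $v\in N$ instead of letting $P_1$ bounce back to $v_1/v_2$. I would argue that $P_1$ never loses the thread: after $P_1$'s reply to $v_1$ or $v_2$, the ball is in $P_2$'s court at a vertex all of whose remaining edges lead into $N$, so $P_1$ retains the bounce-back option on the next turn; and each completed round strictly decreases the number of live edges at $\{v_1,v_2\}$, which starts even (it is $2k$ after $e_{12}$ is removed — $k$ from each of $v_1,v_2$) so the last edge incident to $v_1\cup v_2$ is removed by $P_1$, dumping $P_2$ onto an isolated vertex. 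One subtlety to dispatch: if at some point $P_2$ moves from $v\in N$ to $v_1$ (or $v_2$) but that vertex has only the single edge $vv_1$ left, then $P_2$ has just moved to an isolated vertex and loses immediately — still a $P_1$ win. Assembling these cases gives the result; formally it is the same induction/bouncing argument as Lemma~4.3 with "degree~$2$ forces $P_2$" replaced by "only-edges-into-$N$ keeps $P_1$'s reply available."
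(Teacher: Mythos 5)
Your proposal is correct and follows essentially the same approach as the paper: the identical SSB bounce-back strategy (delete $e_{12}$ on the first move, then always return $\Delta$ to $v_1$ or $v_2$), resting on the same key observation that mutual adjacency traps $P_2$ on $v_1$ or $v_2$ at every one of $P_2$'s turns so the "escape into the rest of $G$" scenario never arises. The only difference is bookkeeping: you certify termination in $P_1$'s favor by directly counting the $2k$ edges at $\{v_1,v_2\}$ and noting each round consumes exactly two of them, whereas the paper inducts on the order $k$ of mutual adjacency and invokes Theorem 3.1 and Lemma 4.1.
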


\begin{proof}
Assume that $G$ is a graph of order $n$ with $\omega(e) = 1$ for all $e \in E(G)$.  Assume further that $v_1$ and $v_2$ are mutually adjacent.  We proceed by induction on the $k$-mutual adjacency.

If $v_1$ and $v_2$ are 1-mutually adjacent and $\Delta = v_1$ then $d(v_1) = d(v_2) = 2$ and both are adjacent to some other vertex, say $v_3$.  When $P_1$ moves to $v_2$ we have $O(P_2, v_2) = \{v_3\}$ forcing $P_2$'s move.  Then $P_1$ moves to $v_1$ for the win.  Notice that this is consistent with the $SSB$ strategy.

Assume that for all $k \leq j$ the first player to move on a graph $G$ with at least two $k$-mutually adjacent vertices $v_1$ and $v_2$ and $\Delta \in \{v_1, v_2\}$ wins $G$ by moving from $v_1$ to $v_2$ on the first move and continually choosing the $v_1$ or $v_2$ option.  This implies that the second player to move from $G-e_{12}$ and $\Delta \in \{v_1, v_2\}$ wins by employing the same strategy which we are calling the $SSB$ strategy.

Assume $G$ is a graph of order $n$ with $(j+1)$-mutually adjacent vertices $v_1$ and $v_2$ for $1 < j < n-2$ and $\Delta = v_1$.  Enumerate the vertices of $G$ in such a way that $O(P_1,v_1) = \{v_{2}, \ldots, v_{j+3}\}$.  Suppose that $P_1$ moves to $v_2$.  Then $O(P_2, v_2) = \{v_3, \ldots, v_{j+3}\}$.  Without loss of generality, assume that $P_2$ moves to $v_3$.  Since $v_1 \in O(P_1,v_3)$,  let $P_1$ move to $v_1$.  Now $O(P_2, v_1) = \{v_4, \ldots, v_{j+3}\}$.  Thus we have $P_2$ on a $j$-mutually adjacent graph minus $e_{12}$.  This means $P_2$ is on a complete bipartite subgraph of order $j$ contained in $G$.  By Theorem 3.1, the second player to start from a bipartite graph will win, and by Lemma 4.1, since $P_1$ can force $P_2$ to move within the confines of this structure, $P_1$ will win this graph.  Thus $P_1$ wins every graph $G$ with at least two $(j+1)$-mutually adjacent vertices and $\Delta$ on a mutually adjacent vertex. 
\end{proof}

\begin{Corollary}
Assume that $G=K_n$ and that $\omega(e) = 1$ for all $e \in K_n$.  Then $P_1$ can win the $K_n$ for all $n>1$.
\end{Corollary}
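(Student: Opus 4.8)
The plan is to identify $K_n$ as the extreme case of the structure in Theorem 4.4 and quote that theorem almost verbatim. The single genuinely separate case is $n=2$: then $K_n$ is one edge, i.e.\ an odd path of length one, so $P_1$ wins by crossing and deleting it, as noted among the path facts of Section 1.

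For $n\geq 3$ the point is that \emph{any} two vertices $v_1,v_2$ of $K_n$ are mutually adjacent: they are adjacent and have the same closed neighborhood $V(K_n)$, so they share every neighbor apart from each other. In the language of Definition 4.3 they are $(n-2)$-mutually adjacent, having degree $n-1$ and exactly the $n-2$ common neighbors $v_3,\dots,v_n$; hence $K_n$ contains an $SSB_{n-2}$ on $v_1,\dots,v_n$. By the symmetry of $K_n$ we may place $\Delta$ at any vertex and relabel it $v_1$, so Theorem 4.4 applies and delivers a $P_1$ win. Explicitly, $P_1$ opens to $v_2$, deleting $e_{12}$, and then follows the $SSB$ strategy of Lemma 4.1 --- always stepping back to whichever of $v_1,v_2$ still has an incident edge --- so that by the parity count in Lemma 4.1, $P_2$ is ultimately stranded on an isolated $v_1$ (if $n$ is odd) or $v_2$ (if $n$ is even), never having needed the edges of $K_n$ lying outside $SSB_{n-2}$. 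Indeed, Lemma 4.1 alone already establishes this; Theorem 4.4 simply puts it in its natural generality.

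I do not anticipate a real obstacle, since Lemma 4.1 and Theorem 4.4 carry the argument. The only item needing attention is that the hypotheses of Theorem 4.4 reach $K_n$ at the relevant size: Theorem 4.4 is stated for $(j+1)$-mutually adjacent vertices with $1<j<n-2$, which for $K_n$ forces $j+1=n-2$ and hence $n\geq 5$, so the small cases $n=3$ (the odd cycle $C_3$, a $P_1$ win by Section 1) and $n=4$ (where $v_1,v_2$ are $2$-mutually adjacent and $P_1$ confines play to $SSB_2$ via Lemma 4.1, beating the $K_{2,2}$ even cycle that otherwise favors $P_2$) must be taken from the base cases already inside the proofs of Theorems 3.1 and 4.4 rather than from the inductive step. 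Collecting the pieces --- $n=2$ from the path fact, $n=3,4$ from those base cases, and $n\geq 5$ from Theorem 4.4 applied to the mutually adjacent pair $v_1,v_2$ --- gives the corollary for every $n>1$.
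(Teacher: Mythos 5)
Your proposal is correct and takes essentially the same route as the paper, which likewise dispatches the smallest cases directly and then observes that any two vertices of $K_n$ are $(n-2)$-mutually adjacent so that Theorem 4.4 applies with $\Delta$ at any vertex. Your extra care about the restriction $1<j<n-2$ in Theorem 4.4's inductive step (sending $n=4$ back to the base cases and Lemma 4.1) is a refinement the paper's two-line proof glosses over, but the underlying argument is identical.
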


\begin{proof}
When $n = 2$ or 3 we have graphs that have been reduced to trivial wins for $P_1$.  Any two vertices in the $K_n$ are $(n-2)$-mutually adjacent.  Thus for $\Delta$ at any vertex, $P_1$ will win the complete graph.
\end{proof}

We have now successfully solved the problem of complete graphs when each edge has weight one.  As shown, the existence of the $SSB$ structure and appropriate starting position solves a large class of graphs.  A quick check will show that the $SSB$ strategy will not work for the complete graph and arbitrary weight assignments.  However, we can show that for $n \leq 7$ the first player can win the complete graph with any weight assignment.  To do this, we modify the $SSB$ strategy slightly to account for the additional options given to the second player.

\end{section}

\vspace{.4in}

\begin{section}{Acknowledgments}
Thank you very much to my advisor, Dr. Warren Shreve, for all of his encouragement in writing this paper, as well as all of the time he spent helping me through the writing process.  Many thanks also to Dr. Sean Sather-Wagstaff, Dr. Joshua Lambert, and Christopher Spicer for all of their insightful comments on the paper.

\end{section}

\bibliographystyle{plain}

\end{document}